\theoremstyle{plain}
\newtheorem{theorem}{Theorem}[section]
\newtheorem{lemma}{Lemma}[section]
\theoremstyle{definition}
\begin{document}
	
	\title[A study of symmetric points in Banach spaces]{A study of symmetric points in Banach spaces}
	\author[Sain, Roy, Bagchi and Balestro]{Debmalya Sain, Saikat Roy, Satya Bagchi, Vitor Balestro}
                \newcommand{\acr}{\newline\indent}

\address[Sain]{Department of Mathematics\\ Indian Institute of Science\\ Bengaluru 560012\\ Karnataka \\INDIA\\ }
\email{saindebmalya@gmail.com}

\address[Roy]{Department of Mathematics\\ National Institute of Technology Durgapur\\ Durgapur 713209\\ West Bengal\\ INDIA}
\email{saikatroy.cu@gmail.com}

\address[Bagchi]{Department of Mathematics\\ National Institute of Technology Durgapur\\ Durgapur 713209\\ West Bengal\\ INDIA}
\email{satya.bagchi@maths.nitdgp.ac.in}

\address[Balestro]{Instituto de Mathem$\acute{A}$tica e Estat$\acute{I}$stica\\Universidade Federal Fluminense\\ 24210201 Niter$\acute{O}$i\\BRAZIL}
\email{vitorbalestro@id.uff.br}

\thanks{The research of Dr. Debmalya Sain is sponsored by Dr. D. S. Kothari Postdoctoral Fellowship under the
mentorship of Professor Gadadhar Misra. The research of Mr. Saikat Roy is supported by CSIR MHRD in terms of Junior research Fellowship.} 

	\subjclass[2010]{Primary 46B20, Secondary 52A21}
	\keywords{Birkhoff-James orthogonality; left-symmetric point; right-symmetric point; polyhedral Banach spaces}

\begin{abstract}
We completely characterize the left-symmetric points, the right-symmetric points, and, the symmetric points in the sense of Birkhoff-James, in a Banach space. We obtain a complete characterization of the left-symmetric (right-symmetric) points in the infinity sum of two Banach spaces, in terms of the left-symmetric (right-symmetric) points of the constituent spaces. As an application of this characterization, we explicitly identify the left-symmetric (right-symmetric) points of some well-known three-dimensional polyhedral Banach spaces. 
\end{abstract}

\maketitle
\section{Introduction.} 

The purpose of this short note is to study the left-symmetric points and the right-symmetric points of a Banach space. We also explore the left-symmetric points and right-symmetric points of the infinity sum of two Banach spaces and apply the obtained results to certain polyhedral Banach spaces. Let us first establish the relevant notations and the terminologies to be used throughout the article. \\

Throughout the text, we use the symbols $ \mathbb{X}, \mathbb{Y}, \mathbb{Z} $ to denote Banach spaces. In this article we work with only \textit{real} Banach spaces. Let $ \mathbb{X} \bigoplus_{\infty} \mathbb{Y} $ denote the infinity sum of $ \mathbb{X} $ and $ \mathbb{Y}, $ endowed with the norm $ \| (x,y) \| = \max\{\|x\|,\|y\|\} $ for any $ (x,y) \in \mathbb{X} \bigoplus_{\infty} \mathbb{Y}. $ Since there is no chance of confusion, we use the same symbol $ \|~\| $ to denote the norm function in $ \mathbb{X},\mathbb{Y} $ and $ \mathbb{X} \bigoplus_{\infty} \mathbb{Y}. $ Similarly, we use the same symbol $ 0 $ to denote the zero vector of any Banach space. Given $ x \in \mathbb{X} $ and $ r>0, $ let $ B(x,r) $ denote the open ball with center at $ x $ and radius $ r. $ Let $ B_{\mathbb{X}} = \{x \in \mathbb{X} \colon \|x\| \leq 1\}$  and $ S_{\mathbb{X}} = \{x \in \mathbb{X} \colon \|x\|=1\} $ be the unit ball and the unit sphere of $\mathbb{X}$ respectively. We say that $ \mathbb{X} $ is polyhedral if $ B_{\mathbb{X}} $ have only finitely many extreme points. Let $ E_{\mathbb{X}}$ denote the set of all extreme points of the unit ball $ B_{\mathbb{X}} $. \\

A vector  $x \in {\mathbb{X}}$ is said to be \textit{Birkhoff-James orthogonal} to a vector $y \in \mathbb{X}$, written as $x \perp_B y$, if $ \|x+\lambda y\|\geq\|x\|$ for all $ \lambda \in \mathbb{R}.$ (see \cite{AMW} and \cite{B}) Clearly, this definition is applicable in any Banach space, in particular, in the infinity sum of two given Banach spaces. It is easy to see that Birkhoff-James orthogonality is homogeneous, i.e., $ x \perp_B y $ implies that $ \alpha x \perp_B \beta y, $ for any two scalars $ \alpha,\beta \in \mathbb{R}. $ However, since Birkhoff-James orthogonality is not symmetric in general, i.e., $ x \perp_B y $ does not necessarily imply that $ y \perp_B x, $ the notions of left-symmetric points and the right-symmetric points were introduced in \cite{Sa}. Let us recall the relevant definitions here. An element $ x \in \mathbb{X} $ is said to be a left-symmetric point in $ \mathbb{X} $ if for any $ y \in \mathbb{X}, $ $ x \perp_B y $ implies that $ y \perp_B x. $ On the other hand, $ x \in \mathbb{X}  $ is said to be a right-symmetric point in $ \mathbb{X} $ if for any $ y \in \mathbb{X}, $ $ y \perp_B x $ implies that $ x \perp_B y.$ $ x \in \mathbb{X}  $ is said to be a symmetric point in $ \mathbb{X} $ if $ x $ is both left-symmetric and right-symmetric.\\

In this article, we characterize the left-symmetric points and the right-symmetric points in a given Banach space. In order to obtain the required characterization, we require the following two notions introduced in \cite{Sa}. Given any vector $ x \in \mathbb{X}, $ we define the sets $x^+$ and $x^-$ by the following: we say that $ y \in x^{+} $ if $ \| x+\lambda y \| \geq \| x \| $ for all $ \lambda \geq 0 $, and we have that $ y \in x^{-} $ if $ \| x+\lambda y \| \geq \| x \| $ for all $ \lambda \leq 0. $ Basic properties of these two notions, their connection with Birkhoff-James orthogonality and some applications of it have been explored  in \cite{S}. In particular, it is easy to see that the set of Birkhoff-James orthogonal vectors of $x$ is given in terms of $x^+$ and $x^-$ by $ x^{\perp} := \{ y \in \mathbb{X}:~x \perp_B y \} = x^{+} \cap x^{-}. $ As an application of the results obtained by us, we explicitly identify the left-symmetric points and the right-symmetric points of a three-dimensional polyhedral Banach space whose unit ball is a right prism whose base is a regular $ 2n $-gon. In this direction, the first step is to characterize the left-symmetric points and the right-symmetric points of the analogous two-dimensional space, whose unit ball is a regular $ 2n $-gon. We would like to remark that in the two-dimensional case, the notions of left-symmetric points and right-symmetric points may be viewed as a local generalization of Radon planes \cite{R}, i.e., two-dimensional normed planes where Birkhoff-James orthogonality is symmetric. We refer the readers to \cite{BMT,H,MS} for some of the extensive studies on Radon planes. It has been proved in \cite{H} that if $ \mathbb{X} $ is a two-dimensional polyhedral Banach space whose unit ball is a regular $ 2n $-gon then $ \mathbb{X} $ is a Radon plane if and only if $ n $ is odd. Therefore, it is perhaps not surprising that in the three-dimensional case of the unit ball being a right prism with regular $ 2n $-gon as its base, the description of the left-symmetric points and the right-symmetric points depends on whether $ n $ is even or odd.

\section{Main Results}
We begin with a complete characterization of the left-symmetric points of a Banach space. 

\begin{theorem}\label{left-symmetric characterization}
	Let $ \mathbb{X} $ be a real Banach space. Then $ x \in \mathbb{X} $ is a left-symmetric point in $ \mathbb{X} $ if and only if given any $ u \in \mathbb{X}, $ the following two conditions hold true:\\ 
	(i) $ u \in x^{-} $ implies that $ x \in u^{-}, $\\
	(ii) $ u \in x^{+} $ implies that $ x \in u^{+}. $
	
\end{theorem}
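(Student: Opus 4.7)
The $(\Leftarrow)$ implication is almost immediate. Recall from the excerpt that $x\perp_B y$ is equivalent to $y\in x^+\cap x^-$. Assuming (i) and (ii) hold and picking any $y$ with $x\perp_B y$, we have $y\in x^+$ and $y\in x^-$, so (ii) and (i) yield $x\in y^+$ and $x\in y^-$ respectively; hence $x\in y^+\cap y^-=y^\perp$, i.e., $y\perp_B x$. Thus $x$ is left-symmetric.

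\textbf{Forward direction.} For the $(\Rightarrow)$ implication, I would first observe that (i) and (ii) are equivalent via $u\mapsto -u$ (since $u\in x^\pm$ iff $-u\in x^\mp$ and $x\in u^\pm$ iff $x\in(-u)^\mp$), so it suffices to prove (i). Assume $x$ is left-symmetric and let $u\in x^-$. If in addition $u\in x^+$, then $u\in x^\perp$, so $x\perp_B u$; left-symmetry gives $u\perp_B x$, that is $x\in u^+\cap u^-\subseteq u^-$, and we are done. In the remaining case $u\in x^-\setminus x^+$ there exists $\lambda_0>0$ with $\|x+\lambda_0 u\|<\|x\|$. The plan is to replace $u$ by a translate $y=u+cx$ that lands in $x^\perp$, apply left-symmetry to this $y$, and then extract a statement about $u$. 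By Hahn--Banach, pick $f\in S_{\mathbb{X}^*}$ with $f(x)=\|x\|$; the chain $f(x+\lambda_0 u)\leq\|x+\lambda_0 u\|<\|x\|=f(x)$ forces $f(u)<0$. Setting $c:=-f(u)/\|x\|>0$ and $y:=u+cx$, one gets $f(y)=0$, and hence $x\perp_B y$ by James' characterization of Birkhoff--James orthogonality.

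\textbf{Unwinding and the main obstacle.} Left-symmetry now yields $y\perp_B x$, which unfolds to $\|u+(c+\mu)x\|\geq\|u+cx\|$ for every $\mu\in\mathbb{R}$; in other words, $c$ is a global minimizer of the convex coercive function $h(t):=\|u+tx\|$. Since $\arg\min h$ is a closed interval $[a,b]$ containing the positive number $c$, either $a\leq 0$ (so that $0\in[a,b]$ and $h(\lambda)\geq h(0)$ trivially) or $a>0$ (so that convexity makes $h$ non-increasing on $(-\infty,a]$, giving $h(\lambda)\geq h(0)$ for $\lambda\leq 0$). In either sub-case $\|u+\lambda x\|\geq\|u\|$ for all $\lambda\leq 0$, i.e., $x\in u^-$, as required. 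The main obstacle is precisely this second case: the natural attempt to invoke left-symmetry at $y=u$ fails because $u$ need not lie in $x^\perp$, so one is forced to slide along the affine line $u+\mathbb{R}x$ until one hits $x^\perp$. Producing this slide via Hahn--Banach and James' theorem, and then recognizing that left-symmetry dictates the location of the minimum of $h$ relative to $0$, is the crux of the argument; the rest is a one-variable convex analysis exercise.
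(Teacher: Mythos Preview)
Your proof is correct, and the converse direction matches the paper's. For the forward direction, however, you take a genuinely different route. The paper works entirely inside the two-dimensional subspace $\mathrm{span}\{x,u\}$: it parametrizes the unit-circle arc from $x/\|x\|$ to $u/\|u\|$, takes the infimum of the $u$-coefficient over those arc points lying in $x^{-}\setminus x^{\perp}$, and uses a continuity/limiting argument to land on a point $\alpha_0 x+\beta_0 u\in x^{\perp}$ with $\alpha_0,\beta_0>0$; it then argues by contradiction that $(\alpha_0 x+\beta_0 u)\perp_B x$ forces $u\perp_B x$. You instead produce the needed vector in $x^{\perp}$ in one stroke via Hahn--Banach and the (easy direction of) James' criterion, and then finish with a direct one-variable convexity argument on $h(t)=\|u+tx\|$, avoiding any contradiction. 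Your approach is shorter and sidesteps the somewhat delicate sequential/openness reasoning in the paper; the paper's argument, on the other hand, stays purely metric and never invokes support functionals. Both hinge on the same key idea---slide $u$ along the $x$-direction until you hit $x^{\perp}$, then apply left-symmetry there---so the difference is really one of technique in locating that point and reading off the conclusion.
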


\begin{proof}

Let us first prove the sufficient part of the theorem. Let $ u \in \mathbb{X} $ be such that $ x \perp_B u. $ Equivalently, $ u \in x^{+} \cap x^{-}. $ Suppose by contradiction that $ u \not\perp_B x. $ Then either $ x \notin u^{+} $ or $ x \notin u^{-}. $ In either case, we arrive at a contradiction to our initial assumption. This completes the proof of the sufficient part of the theorem. We now prove the necessary part of the theorem. We prove $ (i) $  and note that the proof of $ (ii) $ can be completed similarly. Let us assume that $ u \in x^{-} $. Without loss of generality, let $ \| u \| = 1 $. Now, there are two possible cases:\\

Case I: Let $ u\in x^{+} $. In that case, we have that $ x \perp_B u. $ Since $ x $ is a left-symmetric point, we have $ u\perp_B x $. In particular, $ x\in u^{-} $.\\

Case II: Let $ u\notin x^{+} $. Then it follows from Proposition $ 2.1 $ of \cite{S} that $ u\in x^{-}\setminus x^{\perp} $. Let $ \mathcal{A}=\{\alpha x+\beta u : \|\alpha x+\beta u\|=1, \alpha\geq 0,\beta\geq 0\} $. In other words, $ \mathcal{A} $ is the closed shorter arc on $ S_\mathbb{X}\cap span\{ u, x\} $ between $ u $ and $ x $. Let $\mathcal{B}=\{\beta : \exists \ \alpha\in\mathbb{R} \ \mathrm{such \ that} \ \alpha x+\beta u\in \mathcal{A} \ \mathrm{and} \ \ (\alpha x+\beta u)\in x^{-}\setminus x^{\perp} \} $. Clearly, $ 1\in \mathcal{B} $ and $ \mathcal{B} $ is a bounded subset of  $ \mathbb{R} $. Therefore,  $\inf\mathcal{B} $ exists. Let $ \inf\mathcal{B}= \beta_0 $. Then from definition of infimum, there exists a sequence $ \{ \alpha_n x+\beta_n u\}\in \mathcal{A} $ such that $ \{ \alpha_n x+\beta_n u\}\in x^{-}~\setminus~ x^{\perp} $ and $ \beta_n \longrightarrow \beta_0 $. Since $ \{\alpha_n\} $ is clearly bounded, we get that it has a convergent subsequence. Hence, passing to such a subsequence if necessary we may assume that $ \alpha_n  \longrightarrow \alpha_0 \geq 0 $. Then $ \{ \alpha_n x + \beta_n u \} \longrightarrow \alpha_0 x + \beta_0 u. $ Moreover, since $ \mathcal{A} $ is closed, $ \alpha_0 x + \beta_0 u \in \mathcal{A}. $  Now, it follows from the continuity of the norm function that if $ z \in S_{\mathbb{X}} $ is such that $ y \in z^{-}~\setminus~z^{\perp} $ then there exists $ \delta > 0 $ so that for every $ w \in B(z,\delta), $ we have that $ y \in w^{-}~\setminus~w^{\perp}. $ Therefore, we obtain that $ \alpha_0 x + \beta_0 u \notin x^{-}~\setminus~x^{\perp} $, as otherwise the definition of infimum will be violated from the previous observation. On the other hand, since $ (\alpha_n x + \beta_n u)\in x^{-}~\setminus~x^{\perp}, $ once again it follows from the continuity of the norm function that $ (\alpha_0 x + \beta_0 u)\in x^{-}. $ Combining these two observations, it is now easy to deduce that $ (\alpha_0 x + \beta_0 u)\in x^{\perp}. $ We note that $ 0<\beta_0 \leq 1$. Suppose by contradiction that $ x\notin u^{-} $. We claim that it follows from $ x\notin u^{-} $ that $ (\alpha_0 x + \beta_0 u)\not\perp_B x $. In order to prove our claim, we note that $ (\alpha_0 x + \beta_0 u)\perp_B x $ implies that $ \|(\alpha_0 x + \beta_0 u)+\lambda x\| \geq \|(\alpha_0 x + \beta_0 u)\|, $ \, for all $ \lambda\in \mathbb{R} $. Therefore, $  \|\frac{(\alpha_0 + \lambda)}{\beta_0}x + u \|  \geq \frac{1}{\beta_0}\|(\alpha_0 x + \beta_0 u)\| = \frac{1}{\beta_0} \geq \| u\| $ for all $ \lambda \in \mathbb{R} $ (here, we recall that $\alpha_0x + \beta_0u \in \mathcal{A}$, meaning it is a unit vector, and we also remember that we are assuming that $\|u\| = 1$). Taking $ \frac{(\alpha_0 + \lambda)}{\beta_0} = \mu $, we have that  $ \|\mu x + u \|  \geq \|u\| $ for all $ \mu\in \mathbb{R} $, i.e.,  $ u\perp_B x $. In other words, $ x\in u^{+}\cap u^{-} $, which is in contradiction to $ x\notin u^{-} $. It follows that we have indeed that $(\alpha_0x+\beta_0u)\not\perp_B x$. However, since $ x\perp_B (\alpha_0 x+ \beta_0 u), $ this contradicts the fact that $ x $ is a left-symmetric point in $ \mathbb{X} $. Consequently, we must have that $ x\in u^{-} $. This completes the proof of the necessary part of the theorem and thereby establishes it completely.
\end{proof}
  
Our next aim is to completely characterize the right-symmetric points of a Banach space. 

\begin{theorem}\label{right-symmetric characterization}
Let $ \mathbb{X} $ be a real Banach space. Then $ x \in \mathbb{X} $ is a right-symmetric point in $ \mathbb{X} $ if and only if given any $ u \in \mathbb{X}, $ the following two conditions hold true:\\ 
	(i) $ x \in u^{-} $ implies that $ u \in x^{-}, $\\
	(ii) $ x \in u^{+} $ implies that $ u \in x^{+}. $
\end{theorem}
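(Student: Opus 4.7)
The plan is to mirror the proof of Theorem~\ref{left-symmetric characterization}, interchanging the roles of $x$ and the ``moving'' vector along the auxiliary arc. For sufficiency, if (i) and (ii) hold and $u \perp_B x$, then $x \in u^+\cap u^-$; hypotheses (ii) and (i) then force $u \in x^+\cap x^-$, i.e.\ $x \perp_B u$, so $x$ is right-symmetric. For necessity I would establish (i) and obtain (ii) by applying (i) with the base point $-x$ in place of $x$, since $(-x)^\pm = x^\mp$ and right-symmetry is preserved under negation. Fix $u \in \mathbb{X}$ with $x \in u^-$ and normalize $\|u\|=\|x\|=1$. If in addition $x \in u^+$, then $u \perp_B x$; right-symmetry of $x$ then gives $x \perp_B u$, hence $u \in x^-$, settling this sub-case immediately.

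The substantive sub-case is $x \in u^- \setminus u^\perp$. Here I would transpose the infimum construction from Theorem~\ref{left-symmetric characterization} to the closed shorter arc
\[
\mathcal{A} = \{\alpha u + \beta x : \alpha, \beta \geq 0,\ \|\alpha u + \beta x\|=1\}
\]
between $u$ and $x$, and introduce
\[
\mathcal{B}' = \{\alpha : \exists\, \beta \geq 0,\ \alpha u + \beta x \in \mathcal{A}\ \text{and}\ x \in (\alpha u + \beta x)^- \setminus (\alpha u + \beta x)^\perp\}.
\]
The point $(\alpha,\beta)=(1,0)$ lies in $\mathcal{B}'$, so $\alpha_0 := \inf \mathcal{B}' \in [0,1]$ is well-defined. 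A convergent-subsequence argument combined with the continuity lemma already used in Theorem~\ref{left-symmetric characterization} (now read as stability of ``$x \in v^- \setminus v^\perp$'' under perturbations of the base $v$) produces a limit vector $v_0 = \alpha_0 u + \beta_0 x \in \mathcal{A}$ for which $x \in v_0^-$ but $x \notin v_0^- \setminus v_0^\perp$; hence $v_0 \perp_B x$. An easy contradiction rules out $\alpha_0 = 0$ (it would force $v_0=x$ and $x \perp_B x$), so $\alpha_0 \in (0,1]$.

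The finish is a short algebraic step: $v_0 \perp_B x$ gives $\|\alpha_0 u + (\beta_0+\lambda)x\| \geq 1$ for all $\lambda \in \mathbb{R}$, and dividing by $\alpha_0$, together with $1/\alpha_0 \geq 1$, the substitution $\mu = (\beta_0+\lambda)/\alpha_0$ (which sweeps all of $\mathbb{R}$) yields $\|u + \mu x\| \geq 1 = \|u\|$ for every $\mu$. Hence $u \perp_B x$, and right-symmetry of $x$ then gives $x \perp_B u$, so $u \in x^+\cap x^- \subseteq x^-$. The main obstacle I anticipate is the correct role-swap with Theorem~\ref{left-symmetric characterization}: the auxiliary membership condition along the arc must now be ``$x$ lies in the moving set $v^-$'' rather than ``the moving vector lies in $x^-$'', and the parametrization of $\mathcal{A}$ must place $u$ at $\alpha = 1$ so that $\alpha_0 \leq 1$ — precisely the inequality that renders the concluding division harmless.
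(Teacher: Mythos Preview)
Your sufficiency argument and the reduction of (ii) to (i) via $-x$ are fine, but the substantive sub-case $x\in u^{-}\setminus u^{\perp}$ contains a genuine gap. You run a \emph{direct} argument whose output is $u\perp_B x$; but this conclusion is false under the standing hypotheses alone. In the Euclidean plane take $x=(1,0)$ and $u=(-\tfrac{1}{\sqrt2},\tfrac{1}{\sqrt2})$: then $x$ is right-symmetric and $x\in u^{-}\setminus u^{\perp}$, yet $u\not\perp_B x$. So some step must fail. It is the infimum/stability step. The coefficient $\alpha$ is the restriction of a linear functional to $\mathcal{A}$ and need not be monotone along the arc; in the example above one finds $\mathcal{B}'=[1,\sqrt2)$, so $\alpha_0=1$ is attained at the endpoint $v_0=u$. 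Perturbing $u$ along $\mathcal{A}$ moves $\alpha$ \emph{upward}, so stability yields no arc point with $\alpha<\alpha_0$ and the deduction $v_0\perp_B x$ collapses. (Even if you located some $v\in\mathcal{A}$ with $v\perp_B x$ by another device, your division trick needs $\alpha(v)\le 1$, which fails here: the unique such $v$ is $(0,1)$ with $\alpha=\sqrt2$.)

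The paper's proof differs in two essential respects. First, it opens Case~II with the contradiction hypothesis $u\notin x^{-}$; this immediately rules out configurations like the one above and is what makes the arc argument land on a genuine $v_0$ with $v_0\perp_B x$ and $\alpha_0,\beta_0>0$. Second, its finishing move is not the division trick but the normal-cone structure of $x^{\perp}$: from $u\in x^{+}\setminus x^{-}$ one sees that every unit $v\in\mathrm{span}\{u,x\}$ with $x\perp_B v$ must have $u$- and $x$-coefficients of opposite sign, whence $x\not\perp_B(\alpha_0 u+\beta_0 x)$; combined with $(\alpha_0 u+\beta_0 x)\perp_B x$ this contradicts right-symmetry. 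If you wish to salvage your route, you must at minimum insert the hypothesis $u\notin x^{-}$ before forming $\mathcal{B}'$; without it you are trying to prove a statement that is false.
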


\begin{proof}

Let us first prove the sufficient part of the theorem. Let $ u \in \mathbb{X} $ be such that $ u \perp_B x. $ Equivalently, $ x \in u^{+} \cap u^{-}. $ Suppose by contradiction that $ x \not\perp_B u. $ Then either $ u \notin x^{+} $ or $u \notin x^{-}. $ In either case, we arrive at a contradiction to our initial assumption. This completes the proof of the sufficient part of the theorem. We now prove the necessary part of the theorem. We  prove $ (i) $ and note that the proof of $ (ii) $ can be completed similarly. Let us assume that $x \in u^-$. Now, there are two possible cases:\\

Case I: Let $ x \in u^{+}. $ In that case, we have that $ u \perp_B x. $ Since $ x $ is a right-symmetric point in $ \mathbb{X}, $ it follows that $ x \perp_B u. $ In particular, $ u \in x^{-}. $ \\

Case II: Let $ x \notin u^{+}. $ Then it follows from Proposition $ 2.1 $ of \cite{S} that $ x \in u^{-}~\setminus~u^{\perp}. $ Suppose by contradiction that $ u \notin x^{-}. $ As in the proof of the previous theorem, let $ \mathcal{A} = \{ \alpha u + \beta x:~\| \alpha u + \beta x \|=1,~\alpha \geq 0, \beta \geq 0 \} $, and recall also that $ \mathcal{A} $ is the closed shorter arc on $ S_\mathbb{X}\cap span\{ u, x\} $ between $ u $ and $ x $. Let $ \mathcal{B}=\{ \beta:\exists \ \alpha \in \mathbb{R} \ \mathrm{such \ that} \ \alpha u + \beta x \in \mathcal{A} \ \mathrm{and} \ x \in (\alpha u + \beta x)^{-} ~\setminus~ (\alpha u + \beta x)^{\perp} \}. $ It is easy to check that $ \mathcal{B} $ is a bounded subset of $ \mathbb{R} $ and $ 0 \in \mathcal{B}. $ Let $ \beta_0 = \sup \mathcal{B}. $ It follows from the definition of supremum that there exists a sequence $ \{ \alpha_n u + \beta_n x \} $ in $ \mathcal{A} $ such that $ x \in (\alpha_n u + \beta_n x)^{-}~\setminus ~ (\alpha_n u + \beta_n x)^{\perp} $ and $ \beta_n \longrightarrow \beta_0. $ Since $ \{\alpha_n\} $ is clearly bounded, we get that it has a convergent subsequence. Hence, passing to such a subsequence if necessary we may assume that $ \alpha_n  \longrightarrow \alpha_0 \geq 0 $. Then $ \{ \alpha_n u + \beta_n x \} \longrightarrow \alpha_0 u + \beta_0 x. $ Moreover, it is easy to see that $ \alpha_0 u + \beta_0 x \in \mathcal{A}. $ Now, it follows from the continuity of the norm function that if $ z \in S_{\mathbb{X}} $ is such that $ x \in z^{-}~\setminus~z^{\perp} $ then there exists $ \delta > 0 $ so that for every $ w \in B(z,\delta), $ we have that $ x \in w^{-}~\setminus~w^{\perp}. $ Therefore, by using the definition of supremum, we obtain that $ x \notin (\alpha_0 u + \beta_0 x)^{-}~\setminus~(\alpha_0 u + \beta_0 x)^{\perp}. $ On the other hand, since $ x \in (\alpha_n u + \beta_n x)^{-}, $ once again it follows from the continuity of the norm function that $ x \in (\alpha_0 u + \beta_0 x)^{-}. $ Combining these two observations, it is now easy to deduce that $ x \in (\alpha_0 u + \beta_0 x)^{\perp}. $ We note that $ \alpha_0, \beta_0 > 0. $ In Theorem $ 2.1 $ of \cite{SPM}, taking $ \epsilon = 0 $ it is easy to see that $ x^{\perp} = x^{+} \cap x^{-} $  is a \emph{normal cone} in $ \mathbb{X}. $ We recall that a subset $ K $ of $ \mathbb{X} $ is said to be a \emph{normal cone} in $ \mathbb{X} $ if \\

\noindent$ (1)~ K + K \subset K,$

\noindent $(2)~ \alpha K \subset K $ for all $ \alpha \geq 0 $ and

\noindent $(3)~ K \cap (-K) = \{0\}. $\\

 Since $ u \in x^{+}~\setminus~x^{-}, $ it follows that if $ v \in S_{\mathbb{X}} $ is such that $ x \perp_B v, $ then $ v $ must be of the form $ v = \gamma_0 u + \delta_0 x, $ where either of the following is true:\\

\noindent (a) $ \gamma_0 > 0 $ and $ \delta_0 < 0, $

\noindent (b)  $ \gamma_0 < 0 $ and $ \delta_0 > 0. $\\

In particular, it follows that $ x \not\perp_B (\alpha_0 u + \beta_0 x), $ since $ \alpha_0, \beta_0 > 0. $ However, since $ (\alpha_0 u + \beta_0 x) \perp_B x, $ this contradicts our assumption that $ x $ is a  right-symmetric point in $ \mathbb{X}. $ Therefore, it must be true that $ u \in x^{-}. $ This completes the proof of the necessary part of the theorem and thereby establishes it completely.
\end{proof}

We note that as a consequence of Theorem \ref{left-symmetric characterization} and Theorem \ref{right-symmetric characterization}, it is possible to completely characterize symmetric points of a Banach space. We record this useful observation in the next theorem. The proof of the theorem is omitted as it is now obvious.

\begin{theorem}\label{symmetric characterization}
Let $ \mathbb{X} $ be a real Banach space. Then $ x \in \mathbb{X} $ is a symmetric point in $ \mathbb{X} $ if and only if given any $ u \in \mathbb{X}, $ the following four conditions hold true:\\
    (i) $ u \in x^{-} $ implies that $ x \in u^{-}, $\\
	(ii) $ u \in x^{+} $ implies that $ x \in u^{+}, $\\	 
	(iii) $ x \in u^{-} $ implies that $ u \in x^{-}, $\\
	(iv) $ x \in u^{+} $ implies that $ u \in x^{+}. $
	
\end{theorem}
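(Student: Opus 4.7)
The plan is to derive Theorem \ref{symmetric characterization} as an immediate logical consequence of the two preceding characterizations, with essentially no extra work beyond invoking the definition. By the definition given in the introduction, $x \in \mathbb{X}$ is a symmetric point precisely when $x$ is both a left-symmetric point and a right-symmetric point in $\mathbb{X}$. So the natural strategy is to unfold this conjunction and apply Theorem \ref{left-symmetric characterization} and Theorem \ref{right-symmetric characterization} separately to the two halves.

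Concretely, I would proceed as follows. First, assume $x$ is a symmetric point. Then $x$ is left-symmetric, so by the necessary direction of Theorem \ref{left-symmetric characterization} conditions (i) and (ii) of the present theorem hold; and $x$ is also right-symmetric, so by the necessary direction of Theorem \ref{right-symmetric characterization} conditions (iii) and (iv) hold. Conversely, assume all four conditions (i)--(iv) hold for every $u \in \mathbb{X}$. Then (i) and (ii) yield, by the sufficient direction of Theorem \ref{left-symmetric characterization}, that $x$ is left-symmetric; and (iii) and (iv) yield, by the sufficient direction of Theorem \ref{right-symmetric characterization}, that $x$ is right-symmetric. Hence $x$ is symmetric.

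There is no real obstacle here: the statement is a formal combination of the two previous theorems, which is exactly why the authors choose to omit the proof. The only thing worth flagging is that one should be careful to match conditions (i)--(ii) of Theorem \ref{symmetric characterization} with those of Theorem \ref{left-symmetric characterization}, and conditions (iii)--(iv) with conditions (i)--(ii) of Theorem \ref{right-symmetric characterization}, since the labelling shifts. Beyond that bookkeeping, the argument is a one-line application of the equivalence $\text{symmetric} \iff \text{left-symmetric} \ \wedge\ \text{right-symmetric}$ together with the two already-established characterizations.
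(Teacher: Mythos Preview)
Your proposal is correct and matches the paper's approach exactly: the authors explicitly state that Theorem \ref{symmetric characterization} follows from Theorem \ref{left-symmetric characterization} and Theorem \ref{right-symmetric characterization} and omit the proof as obvious. Your unpacking of the conjunction ``left-symmetric $\wedge$ right-symmetric'' via the two preceding characterizations is precisely the intended argument.
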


We now turn our attention towards obtaining an explicit characterization of the left-symmetric points and the right-symmetric points of the infinity sum of two given Banach spaces. Since Birkhoff-James orthogonality is homogeneous, it is clearly sufficient to describe the left-symmetric points and the right-symmetric points on the unit sphere of the concerned space. First we characterize the left-symmetric points of the infinity sum of two Banach spaces.

\begin{theorem}\label{left-symmetric}
	Let $\mathbb{X}, \mathbb{Y}$ be real Banach spaces and let $ \mathbb{Z}=\mathbb{X} \bigoplus_{\infty} \mathbb{Y}. $ Then $ (x,y) \in S_{\mathbb{Z}} $ is a left-symmetric point in $ \mathbb{Z} $ if and only if either of the following is true:\\
	(i) $ x \in S_{\mathbb{X}} $ is a left-symmetric point in $ \mathbb{X} $ and $ y=0, $\\
	(ii) $ x=0 $ and $ y \in S_{\mathbb{Y}} $ is a left-symmetric point in $ \mathbb{Y}. $ 

\end{theorem}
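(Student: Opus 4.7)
My approach is to reduce Birkhoff--James orthogonality from $(x, 0)$ in $\mathbb{Z}$ to Birkhoff--James orthogonality from $x$ in $\mathbb{X}$ (and symmetrically for $(0, y)$). The plan is to first establish the following reduction lemma: for any nonzero $x \in \mathbb{X}$ and any $(u, v) \in \mathbb{Z}$, one has $(x, 0) \perp_B (u, v)$ if and only if $x \perp_B u$, with no dependence on $v$. The ``if'' direction is an immediate consequence of $\|x + \lambda u\| \geq \|x\|$. For ``only if'', I would pick $\lambda$ so small that $|\lambda|\cdot\|v\| < \|x\|$ (the case $v = 0$ is trivial), deducing $\|x + \lambda u\| \geq \|x\|$ for every $\lambda$ in a neighbourhood of $0$, and then invoke convexity of the function $\lambda \mapsto \|x + \lambda u\|$ to promote this local inequality to $\|x + \lambda u\| \geq \|x\|$ for all $\lambda \in \mathbb{R}$ (a convex function with a local minimum at $0$ attains its global minimum there).

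With the reduction lemma in hand, the sufficient direction is short. Assume $(i)$; the case $(ii)$ is symmetric. If $(x, 0) \perp_B (u, v)$, then $x \perp_B u$ by the lemma. Since $x$ is left-symmetric in $\mathbb{X}$, this gives $u \perp_B x$, i.e., $\|u + \mu x\| \geq \|u\|$ for every $\mu \in \mathbb{R}$. Consequently
\[
\|(u, v) + \mu(x, 0)\| = \max\{\|u + \mu x\|, \|v\|\} \geq \max\{\|u\|, \|v\|\} = \|(u, v)\|,
\]
so $(u, v) \perp_B (x, 0)$, and $(x, 0)$ is left-symmetric in $\mathbb{Z}$.

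The necessary direction I would handle by contrapositive: if neither $(i)$ nor $(ii)$ holds, I exhibit an explicit failure of left-symmetry of $(x, y)$. If $y = 0$ but $x$ is not left-symmetric in $\mathbb{X}$, pick $u \in \mathbb{X}$ with $x \perp_B u$ and $u \not\perp_B x$; the reduction lemma, applied at both $(x, 0)$ and $(u, 0)$, shows that $(u, 0)$ is a witness. The case $x = 0$ with $y$ not left-symmetric is symmetric. In the remaining case $x \neq 0$ and $y \neq 0$, I may assume, after possibly swapping the roles of $\mathbb{X}$ and $\mathbb{Y}$, that $\|x\| = 1$ and $0 < \|y\| \leq 1$. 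I then claim $(0, y)$ witnesses the failure of left-symmetry: $(x, y) \perp_B (0, y)$ follows from $\max\{1, |1 + \lambda|\|y\|\} \geq 1$ for every $\lambda$, whereas evaluating at $\mu_0 = -\|y\|/(1 + \|y\|)$ yields
\[
\|(0, y) + \mu_0(x, y)\| = \max\{|\mu_0|, |1 + \mu_0|\|y\|\} = \frac{\|y\|}{1 + \|y\|} < \|y\| = \|(0, y)\|,
\]
so $(0, y) \not\perp_B (x, y)$.

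The main obstacle is the reduction lemma itself. A priori, a large $\|v\|$ could compensate for a failure of $x \perp_B u$ at large $|\lambda|$, potentially destroying the equivalence. Ruling this out requires exactly the interplay between the behaviour of $\lambda \mapsto \|x + \lambda u\|$ near the origin (where the $|\lambda|\|v\|$ term is negligible) and the convexity of this one-variable norm function, which propagates a local lower bound to a global one. Once this lemma is secured, the rest of the argument is essentially bookkeeping together with the single verification at $\mu_0 = -\|y\|/(1 + \|y\|)$.
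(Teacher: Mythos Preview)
Your proof is correct and follows essentially the same route as the paper: both arguments use the convexity of $\lambda \mapsto \|x + \lambda u\|$ to promote the local inequality $\|x + \lambda u\| \geq \|x\|$ near $\lambda = 0$ to a global one (your reduction lemma), and both exhibit $(0, y)$ as the witness to the failure of left-symmetry when $x$ and $y$ are both nonzero. The only cosmetic differences are that you package the reduction as a standalone lemma and give the explicit value $\mu_0 = -\|y\|/(1 + \|y\|)$, whereas the paper simply takes $\lambda < 0$ of sufficiently small absolute value.
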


\begin{proof}
Let us first prove the sufficient part of the theorem. We first assume that $ (i) $ holds true. Let $ (x,0) \perp_B (u,v) $ in $ \mathbb{X} \bigoplus_{\infty} \mathbb{Y}, $ i.e., $ \max\{ \| x+\lambda u \|, \| \lambda v \| \} \geq \| x \| = 1. $ This shows that for any $ \lambda $ with sufficiently small absolute value, $ \| x+\lambda u \| \geq \| x \|. $ Using the convexity of the norm function, it is now easy to verify that $ \| x+\lambda u \| \geq \| x \| $ for any $ \lambda, $ i.e., $ x \perp_B u. $ Since $ x $ is a left-symmetric point in $ \mathbb{X}, $ it follows that $ u \perp_B x. $ Now, for any $ \lambda \in \mathbb{R}, $ we have the following:
\[ \| (u,v)+\lambda (x,0) \| =\max\{ \| u+\lambda x \|, \| v \| \} \geq \max\{\|u\|,\|v\|\} = \| (u,v) \|. \]
In other words, $ (u,v) \perp_B (x,0)$ whenever $ (x,0) \perp_B (u,v). $ This proves that $ (x,0) $ is a left-symmetric point in $ \mathbb{Z}. $ Similarly, assuming that $ (ii) $ holds true, we can show that $ (0,y) $ is a left-symmetric point in $ \mathbb{Z} $. This completes the proof of the sufficient part of the theorem. Let us now prove the necessary part of the theorem. Let $ (x,y) \in S_{\mathbb{Z}} $ be a left-symmetric point in $ \mathbb{Z}. $ Clearly, either $ \| x \| = 1 $ or $ \| y \| =1. $ Without loss of generality, let us assume that $ \| x \| = 1. $ We note that $ \| y \| \leq 1. $ We claim that $ y=0. $ Suppose by contradiction that $ y \neq 0. $ Since $ \max\{ \| x \|, \| y+\lambda y \| \}\geq \| x \|=\| (x,y) \| =1 , $ it follows that $ (x,y) \perp_B (0,y). $ However, for any $ \lambda < 0 $ with sufficiently small absolute value, we have that $ \| (0,y)+\lambda (x,y) \| = \max\{ \| \lambda x \|, \| y+\lambda y \| \} < \| y \| = \| (0,y) \|. $ In other words, $ (0,y) \not\perp_B (x,y). $ Clearly, this contradicts our assumption that $ (x,y) $ is a left-symmetric point in $ \mathbb{Z}. $ So, we conclude that if $ (x,y) \in S_{\mathbb{Z}} $ is a left-symmetric point in $ \mathbb{Z} $ with $ \| x \| = 1 $ then $ y = 0 $. In that case, we also claim that $ x $ is a left-symmetric point in $ \mathbb{X}. $ Suppose by contradiction that there exists $ u \in S_{\mathbb{X}} $ such that $ x \perp_B u $ but $ u \not\perp_B x. $ It is easy to see that either $ x \notin u^{+} $ or $ x \notin u^{-}. $ Without loss of generality, let us assume that $ x \notin u^{+}. $ Then $ \| u+\lambda x \| < \| u \|=1, $ for some $ \lambda >0 $. Using this fact, it is easy to verify that $ (x,0) \perp_B (u,0) $ but $ (u,0) \not\perp_B (x,0). $ Once again, this contradicts our assumption that $ (x,0) $ is a left-symmetric point in $ \mathbb{Z}. $ This contradiction completes the proof of our claim that $ x $ is a left-symmetric point in $ \mathbb{X}. $ Thus we see that all the conditions stated in $ (i) $ are satisfied. Similarly, assuming that $ \| y \|=1 , $ we can show that all the conditions stated in $ (ii) $ hold true. This completes the proof of the necessary part of the theorem and establishes it completely. 	
\end{proof}

We next characterize the right-symmetric points of the infinity sum of two Banach spaces. We require the following lemma to serve our purpose.

\begin{lemma}\label{right-symmetric lemma}
Let $\mathbb{X}, \mathbb{Y}$ be  real Banach spaces and let $ \mathbb{Z} = \mathbb{X} \bigoplus_{\infty} \mathbb{Y}.  $ Let $ (u,v) \in S_{\mathbb{Z}} $ be such that $ \| u \| = \| v \| =1. $ If $ (x,y) \in \mathbb{Z} $ is such that $ (u,v) \perp_B (x,y) $ then either of the following is true:\\
	(i) $ x \in u^{+} $ and $ y \in v^{-}, $\\
	(ii) $ x \in u^{-} $ and $ y \in v^{+}. $ 
	
\end{lemma}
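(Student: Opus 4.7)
The plan is to reduce the statement to a one-variable analysis of the two convex functions $f(\lambda) = \|u+\lambda x\|$ and $g(\lambda) = \|v + \lambda y\|$, each satisfying $f(0) = g(0) = 1$. The orthogonality hypothesis $(u,v) \perp_B (x,y)$ translates directly into the pointwise estimate $\max\{f(\lambda), g(\lambda)\} \geq 1$ for every $\lambda \in \mathbb{R}$, while the conclusions $x \in u^{\pm}$ and $y \in v^{\pm}$ translate into $f \geq 1$ and $g \geq 1$ holding on the corresponding closed half-line at $0$.

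The key tool is a short convexity observation: if $h$ is convex with $h(0) = 1$ and $h(\lambda_0) < 1$ for some $\lambda_0 > 0$, then for every $t \in (0,1)$,
\[
  h(t \lambda_0) \leq (1-t)\, h(0) + t\, h(\lambda_0) < 1,
\]
so $(0, \lambda_0) \subset \{h < 1\}$; the mirror statement holds for $\lambda_0 < 0$. Consequently $\{h < 1\}$ is always an interval anchored at $0$ and lying entirely in $(0, \infty)$, entirely in $(-\infty, 0)$, or empty. This is essentially Proposition~$2.1$ of \cite{S} and already gives $x \in u^+ \cup u^-$ and $y \in v^+ \cup v^-$ for free.

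With this in hand the case analysis is brief. If $u \perp_B x$ then $x \in u^+ \cap u^-$, so either (i) or (ii) follows depending on whichever of $v^+, v^-$ happens to contain $y$. Otherwise $\{f < 1\}$ is a non-empty sub-interval of either $(0,\infty)$ or $(-\infty, 0)$. Assume the former, so that $x \in u^- \setminus u^+$; picking any $\lambda^* > 0$ with $f(\lambda^*) < 1$ forces $f < 1$ throughout $(0, \lambda^*)$, and the orthogonality inequality then forces $g \geq 1$ on this same sub-interval. But this prevents $\{g < 1\}$ from being a subset of $(0,\infty)$, so by the anchoring observation $\{g < 1\} \subset (-\infty, 0)$ or is empty, i.e., $y \in v^+$. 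Combined with $x \in u^-$ this yields conclusion (ii); the symmetric case $\{f < 1\} \subset (-\infty, 0)$ produces (i).

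The only real obstacle is stating the convexity anchoring lemma cleanly; once the sublevel sets of $f$ and $g$ below the value $1$ are known to be intervals with $0$ as an endpoint, the orthogonality hypothesis forces them to occupy opposite sides of the origin, which is precisely what the dichotomy (i)/(ii) expresses.
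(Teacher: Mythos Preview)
Your proof is correct and follows essentially the same route as the paper's: both reduce to the trichotomy $x\in u^{+}\cap u^{-}$, $x\in u^{+}\setminus u^{-}$, or $x\in u^{-}\setminus u^{+}$ (Proposition~2.1 of \cite{S}), dispose of the first case trivially, and in the strict case use the orthogonality inequality $\max\{\|u+\lambda x\|,\|v+\lambda y\|\}\geq 1$ together with convexity to force the norm of $v+\lambda y$ to stay at least $1$ on the appropriate half-line. Your sublevel-set framing (that $\{h<1\}$ is an interval anchored at $0$ lying on one side of the origin) is a slightly cleaner packaging of the same convexity step the paper invokes, but the underlying argument is identical.
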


\begin{proof}
It follows from Proposition $ 2.1 $ of \cite{S} that either $ x \in u^{+} $ or $ x \in u^{-}. $ Without loss of generality, let us assume that $ x \in u^{+}. $ Let us consider the following two cases:\\

Case I: Let $ x \in u^{-}. $ By the same argument, either $ y \in v^{+} $ or $ y \in v^{-}. $ In each of these two sub-cases, it is easy to see that the statement of the lemma holds true.\\

Case II: Let $ x \notin u^{-}. $ Then $ \| u + \lambda x \| < \| u \| = 1, $ whenever $ \lambda < 0 $ is of sufficiently small absolute value. Since $ (u,v) \perp_B (x,y), $ we have that $ \| (u,v) + \lambda (x,y) \| = \max\{ \| u + \lambda x \|, \| v + \lambda y \| \} \geq \| (u,v) \| = 1 $ for any $ \lambda \in \mathbb{R}. $ Therefore, whenever $ \lambda < 0 $ is of sufficiently small absolute value, we must have that $ \| v + \lambda y \| \geq \| v \| = 1. $ Using the convexity of the norm function, it is now easy to deduce that $ \| v + \lambda y \| \geq \| v \| = 1 $ for all $ \lambda < 0. $ In other words, $ y \in v^{-}. $ This completes the proof of the lemma.
 \end{proof}

Now the promised characterization:

\begin{theorem}\label{right-symmetric}
Let $\mathbb{X}, \mathbb{Y}$ be real Banach spaces and let $ \mathbb{Z}=\mathbb{X} \bigoplus_{\infty} \mathbb{Y}. $ Then $ (x,y) \in S_{\mathbb{Z}} $ is a right-symmetric point in $ \mathbb{Z} $ if and only if $ \| x \| = \| y \| = 1 $ and $ x, y $ are right-symmetric points in $ \mathbb{X} $ and $ \mathbb{Y} $ respectively.
	
\end{theorem}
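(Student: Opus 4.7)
The plan is to split into sufficiency and necessity, invoking both Lemma \ref{right-symmetric lemma} and Theorem \ref{right-symmetric characterization} to convert between the ``$u^{\pm}$'' language and Birkhoff--James orthogonality in the coordinate spaces.

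For sufficiency, assume $\|x\|=\|y\|=1$ with $x,y$ right-symmetric in $\mathbb{X},\mathbb{Y}$, and let $(u,v)\perp_B (x,y)$; by homogeneity assume $\|(u,v)\|=1$. If exactly one coordinate has unit norm, say $\|u\|=1$ and $\|v\|<1$, then for small $|\lambda|$ one has $\|v+\lambda y\|<1$, which forces $\|u+\lambda x\|\geq 1$ near the origin; convexity of $\lambda\mapsto \|u+\lambda x\|$ extends this to all $\lambda$, so $u\perp_B x$, and right-symmetry of $x$ yields $x\perp_B u$, which clearly implies $(x,y)\perp_B (u,v)$. If instead $\|u\|=\|v\|=1$, Lemma \ref{right-symmetric lemma} gives, say, $x\in u^+$ and $y\in v^-$; by Theorem \ref{right-symmetric characterization} right-symmetry of $x$ and $y$ upgrades these to $u\in x^+$ and $v\in y^-$, and these handle $\lambda\geq 0$ and $\lambda\leq 0$ respectively in the displayed bound for $\|(x,y)+\lambda(u,v)\|$, proving $(x,y)\perp_B (u,v)$.

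For necessity, first I show $\|x\|=\|y\|=1$. Suppose $\|x\|=1$ but $\|y\|<1$ (the reverse case is analogous). Choose $v\in S_{\mathbb{Y}}$ with $y\in v^+$: in $\dim\mathbb{Y}\geq 2$ take a Birkhoff--James orthogonal direction $v\perp_B y$, and for $\dim\mathbb{Y}=1$ take $v=y/\|y\|$ (or any unit vector if $y=0$). A direct calculation gives $(-x,v)\perp_B (x,y)$, using that $|\lambda-1|\geq 1$ outside $(0,2)$ while $\|v+\lambda y\|\geq 1$ for $\lambda\geq 0$. However, at $\lambda_0=(1-\|y\|)/2>0$ we have $|1-\lambda_0|=(1+\|y\|)/2<1$ and $\|y+\lambda_0 v\|\leq\|y\|+\lambda_0=(1+\|y\|)/2<1$, so $(x,y)\not\perp_B(-x,v)$, contradicting right-symmetry.

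It remains to show that $x$ (and symmetrically $y$) is right-symmetric in $\mathbb{X}$. Let $u\in\mathbb{X}$ satisfy $u\perp_B x$; rescale so that $\|u\|\geq 1$. For every $v\in\mathbb{Y}$ with $\|v\|\leq\|u\|$, the inequality $\|u+\lambda x\|\geq\|u\|$ gives $(u,v)\perp_B (x,y)$, whence right-symmetry of $(x,y)$ yields $(x,y)\perp_B (u,v)$. Since $\|x\|=\|y\|=1$, Lemma \ref{right-symmetric lemma} furnishes the dichotomy: either $u\in x^+$ and $v\in y^-$, or $u\in x^-$ and $v\in y^+$. If $u\notin x^+$, the second alternative must hold for every admissible $v$; taking $v=-y$ (permitted since $\|-y\|=1\leq\|u\|$) would force $-y\in y^+$, contradicting $\|y-y\|=0<1$ at $\lambda=1$. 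Hence $u\in x^+$, and the same argument applied to $-u$ gives $-u\in x^+$, i.e., $u\in x^-$; combining, $x\perp_B u$. The main delicate step is the scaling of $u$ so that $v=-y$ becomes admissible, after which the dichotomy yields a clean contradiction.
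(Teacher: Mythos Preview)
Your proof is correct and tracks the paper's argument closely. The sufficiency direction is handled identically: the case split according to whether one or both coordinates of $(u,v)$ attain the norm, the convexity argument forcing $u\perp_B x$ in the first case, and the use of Lemma \ref{right-symmetric lemma} together with Theorem \ref{right-symmetric characterization} in the second case all match the paper. Likewise, your contradiction for $\|y\|<1$ via the test vector $(-x,v)$ with $y\in v^+$ is exactly the paper's idea, carried out with an explicit $\lambda_0$ rather than an appeal to ``sufficiently small $\lambda$''.

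The one place you genuinely diverge is in showing that $x$ is right-symmetric in $\mathbb{X}$. The paper argues by contradiction: assuming $u\perp_B x$ but $u\in x^{+}\setminus x^{-}$, it picks any $v\in y^{+}\setminus y^{-}$ (for instance $v=y$) and finds a single $\lambda_0<0$ at which both coordinates drop below $1$, so $(x,y)\not\perp_B(u,v)$ while $(u,v)\perp_B(x,y)$. You instead exploit Lemma \ref{right-symmetric lemma} in the reverse direction: knowing $(x,y)\perp_B(u,v)$ for every $v$ with $\|v\|\le\|u\|$, you feed in the particular choice $v=-y$ to force $u\in x^{+}$ from the dichotomy, and then repeat with $-u$ to get $u\in x^{-}$. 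Both arguments are short; yours has the minor advantage of not needing to justify the existence of some $v\in y^{+}\setminus y^{-}$ nor to synchronize two ``small $\lambda$'' conditions, at the cost of invoking the lemma once more.
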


\begin{proof}
Let us first prove the sufficient part of the theorem. Let $ (u,v) \in \mathbb{Z} $ be such that $ (u,v) \perp_B (x,y). $ Let us consider the following two cases:\\

Case I: Let $ \| u \| \neq \| v \|. $ Without loss of generality, we assume that $ \| u \| > \| v \|. $ Since $ (u,v) \perp_B (x,y), $ we have that $ \max\{ \| u + \lambda x \|, \| v + \lambda y \| \} \geq \| (u,v) \| = \| u \| $ for all $ \lambda \in \mathbb{R}. $ Therefore, whenever $ \lambda \in \mathbb{R} $ is of sufficiently small absolute value, it is easy to see that $ \| u + \lambda x \| \geq \| u \|. $ As argued before, using the convexity of the norm function, it can be proved that $ \| u + \lambda x \| \geq \| u \| $ for all $ \lambda \in \mathbb{R}, $ i.e., $ u \perp_B x. $ Since $ x $ is a right-symmetric point in $ \mathbb{X}, $ we have that $ x \perp_B u. $ Thus we can deduce the following for any $ \lambda \in \mathbb{R}: $
\[ \| (x,y) + \lambda (u,v) \| = \max\{ \| x + \lambda u \|, \| y + \lambda v \| \} \geq \| x + \lambda u \| \geq \| x \| = \| (x,y) \| = 1.  \]
This proves that $ (x,y) \perp_B (u,v). $\\

Case II: Let $ \| u \| = \| v \|. $ If $ \| u \| = \| v \| = 0 $ then it is immediate that $ (x,y) \perp_B (0,0) = (u,v). $ Without loss of generality, let us assume that $ \| u \| = \| v \| = 1. $ Applying Lemma \ref{right-symmetric lemma}, we observe that either of the following must hold true:\\
$ (i) $ $ x \in u^{+}  $ and $ y \in v^{-}, $ $ (ii) $ $ x \in u^{-} $ and $ y \in v^{+}. $ Let us first assume that $ (i) $ holds true. Since $ x, y $ are right-symmetric points in $ \mathbb{X} $ and $ \mathbb{Y} $ respectively, applying Theorem \ref{right-symmetric characterization}, it follows that $ u \in x^{+} $ and $ v \in y^{-}. $ Now, for any $ \lambda \geq 0, $ the following holds true:
\[ \| (x,y) + \lambda (u,v) \| = \max\{ \| x + \lambda u \|, \| y + \lambda v \| \} \geq \| x + \lambda u \| \geq \| x \| = \| (x,y) \| = 1.  \]
On the other hand, for any $ \lambda \leq 0, $ the following holds true:
\[ \| (x,y) + \lambda (u,v) \| = \max\{ \| x + \lambda u \|, \| y + \lambda v \| \} \geq \| y + \lambda v \| \geq \| y \| = \| (x,y) \| = 1.  \]
This proves that $ (x,y) \perp_B (u,v). $ Similarly, assuming that $ (ii) $ holds true, we can deduce the same conclusion. Therefore, $ (x,y) \perp_B (u,v), $ whenever $ (u,v) \perp_B (x,y). $ In other words, $ (x,y) $ is a right-symmetric point in $ \mathbb{Z}. $ This completes the proof of the sufficient part of the theorem.\\

Let us now prove the necessary part of the theorem. Let $ (x,y) \in S_{\mathbb{Z}} $ be a right-symmetric point in $ \mathbb{Z}. $ Clearly, $ \| y \| \leq 1. $ We claim that $ \| y \| = 1. $ Suppose by contradiction that $ \| y \| < 1. $ We note that under this assumption, we have that $ \| x \| = 1. $ Choose $ w \in S_{\mathbb{Y}} $ such that $ y \in w^{+}. $ Now, for any $ \lambda \geq 0, $ we have that $ \| (-x,w) + \lambda (x,y) \| \geq \| w + \lambda y \| \geq \| w \| = 1 = \| (-x,w) \|. $ On the other hand, for any $ \lambda < 0, $ we have that $ \| (-x,w) + \lambda (x,y) \| \geq \| -x + \lambda x \| = 1 - \lambda > \| (-x,w) \|. $ This proves that $ (-x,w) \perp_B (x,y). $ However, when $ \lambda > 0 $ is of sufficiently small absolute value, we have that $ \| (x,y) + \lambda (-x,w) \| = \max\{ \| x - \lambda x  \|, \| y + \lambda w \| \} = \| (1 - \lambda) x \| = 1 - \lambda < 1 = \| (x,y) \|. $ In particular, this implies that $ (x,y) \not\perp_B (-x,w). $ This contradicts our assumption that $ (x,y) $ is a right-symmetric point in $ \mathbb{Z}. $ Therefore, it follows that $ \| y \| = 1. $ Similarly, we can show that $ \| x \| = 1. $ Our next claim is that $ x $ is a right-symmetric point in $ \mathbb{X}. $ Suppose by contradiction that $ x $ is not a right-symmetric point in $ \mathbb{X}. $ Then there exists $ u \in S_{\mathbb{X}} $ such that $ u \perp_B x $ but $ x \not\perp_B u. $ It follows from Proposition $ 2.1 $ of \cite{S} that either $ u \in x^{+}~\setminus~x^{-} $ or $ u \in x^{-}~\setminus~x^{+}. $ Without loss of generality, let us assume that $ u \in x^{+}~\setminus~x^{-}. $ Choose $ v \in S_{\mathbb{Y}} $ such that $ v \in y^{+}~\setminus~y^{-}. $ Then there exists $ \lambda_0 < 0 $ such that $ \| x + \lambda_0 u \| < \| x \| = 1 $ and $ \| y + \lambda_0 v \| < \| y \| = 1. $ This shows that $ (x,y) \not\perp_B (u,v). $ On the other hand, since $ u \perp_B x, $ it is easy to check that $ (u,v) \perp_B (x,y). $ Clearly, this contradicts our assumption that $ (x,y) $ is a right-symmetric point in $ \mathbb{Z}. $ Therefore, $ x $ must be a right-symmetric point in $ \mathbb{X}. $ Similarly, we can show that $ y $ is a right-symmetric point in $ \mathbb{Y}. $ This completes the proof of the necessary part of the theorem and establishes it completely. 
\end{proof}

We would like to apply Theorem \ref{left-symmetric} and Theorem \ref{right-symmetric} to identify the left-symmetric points and the right-symmetric points of some three-dimensional polyhedral Banach spaces. However, first we need to consider the analogous two-dimensional polyhedral Banach spaces. It was observed by Heil \cite{H} that if $ \mathbb{X} $ is a two-dimensional real polyhedral Banach space such that $ S_{\mathbb{X}} $ is a regular polygon with $ 2n $ sides, then Birkhoff-James orthogonality is symmetric in $ \mathbb{X} $ if and only if $ n $ is odd. In that case, every element of $ \mathbb{X} $ is a left-symmetric point as well as a right-symmetric point in $ \mathbb{X}. $  In the next two theorems, we characterize the left-symmetric points and the right-symmetric points of a two-dimensional real polyhedral Banach space $ \mathbb{X}, $ when $ S_{\mathbb{X}} $ is a regular polygon with $ 2n $ sides, where $ n $ is even. In this context, let us mention the following notations which are relevant to our next two theorems. Let $ \mathcal{N} =\{ 0, 1, \dots , 4n-1 \} $. We equip $ \mathcal{N} $ with a binary operation $ `` \dotplus " $, defined by $ n_1 \dotplus n_2 = n_1+n_2 $ if $ n_1+ n_2 < 4n, $ and $ n_1 \dotplus n_2 = n_1 +n_2-4n, $ if $ n_1 + n_2 \geq 4n, $ for all $ n_1, n_2\in \mathcal{N} $. For any $ j\in \mathcal{N} $, we will denote $ j \dotplus n $ by $ p $, $ j \dotplus 2n $ by $ q $, $ j \dotplus 3n $ by $ \mathbf{p} $ and $ j \dotplus 4n $ by $ \mathbf{q} $. It is trivial to see that $ \mathbf{q} = j $ but we use this notation only to make our theorems look more convenient. Let us recall that a polyhedron $ Q $ is said to be a face of the polyhedron $ P $ if either $ Q = P $ or if  we  can  write $ Q = P \cap \delta M $, where $ M $ is  a  closed  half-space in $ \mathbb{X} $ containing $ P ,$ and $ \delta M $ denotes the boundary of $ M $. If $ dim~Q = i, $ then $ Q $ is called an $ i $-face of $ P $. $ (n-1) $-faces of $ P $ are called facets of $ P $ and $ 1 $-faces of $ P $ are called edges of $ P. $ Let us now characterize the left-symmetric points and the right symmetric points of a two-dimensional polyhedral Banach space whose unit sphere is a regular polygon with $ 4n $ sides.

\begin{theorem}\label{4k gon left-symmetric}
Let $ \mathbb{X} $ be a two-dimensional real polyhedral Banach space such that $ S_{\mathbb{X}} $ is a regular polygon with $ 4n $ sides, where $ n \in \mathbb{N}. $ Then $ x \in S_{\mathbb{X}} $ is a left-symmetric point in $ \mathbb{X} $ if and only if $ x $ is the midpoint of some edge of $ S_{\mathbb{X}}. $
	
\end{theorem}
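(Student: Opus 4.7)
The plan is to exploit the $\pi/2$-rotational symmetry of the regular $4n$-gon, which, under the cyclic labelling of the $4n$ extreme points of $B_\mathbb{X}$ as $v_0, v_1, \dots, v_{4n-1}$, corresponds to the index shift $j \mapsto p = j \dotplus n$. Writing $E_j = [v_j, v_{j \dotplus 1}]$ for the $j$-th edge and $M_j = \tfrac{1}{2}(v_j + v_{j \dotplus 1})$ for its midpoint (which lies on $S_\mathbb{X}$), the driving geometric observation is that the direction vector of $E_j$ is a nonzero scalar multiple of $M_p$. This holds because the edge of a regular $4n$-gon is Euclidean-perpendicular to the radius through its midpoint, and the rotation by $\pi/2$ sends $M_j$ to $M_p$. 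Note also that $M_q = -M_j$ where $q = j \dotplus 2n$.

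For the sufficient direction, suppose $x = M_j$. Since $x$ lies in the relative interior of $E_j$, the set $x^{\perp}$ equals $\operatorname{span}(M_p)$. By the homogeneity of Birkhoff--James orthogonality, $x$ will be left-symmetric as soon as $M_p \perp_B M_j$; but applying the same identification to $M_p$ gives $M_p^{\perp} = \operatorname{span}(M_{p \dotplus n}) = \operatorname{span}(M_q) = \operatorname{span}(M_j)$, so $M_p \perp_B M_j$ holds.

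For the necessary direction, assume $x \in S_\mathbb{X}$ is not an edge midpoint, and produce a witness $y$ with $x \perp_B y$ but $y \not\perp_B x$. First, if $x$ lies in the relative interior of some edge $E_j$ but is not its midpoint, write $x = (1-t)v_j + t v_{j \dotplus 1}$ with $t \in (0,1) \setminus \{1/2\}$; then $x^{\perp} = \operatorname{span}(M_p)$, so $x \perp_B M_p$, whereas $M_p \perp_B x$ would require $x \in \operatorname{span}(M_j)$, forcing $t = 1/2$, a contradiction. Second, if $x = v_j$ is a vertex, the affine line $v_j + \mathbb{R} M_p$ coincides with the line carrying the edge $E_j$, which supports $B_\mathbb{X}$, whence $v_j \perp_B M_p$; however, $M_p \perp_B v_j$ would require $v_j$ to be a scalar multiple of $M_j = \tfrac{1}{2}(v_j + v_{j \dotplus 1})$, forcing $v_{j \dotplus 1}$ to be collinear with $v_j$ through the origin, which is impossible since consecutive vertices of a regular $4n$-gon with $n \geq 1$ are linearly independent.

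The main obstacle is the rotational-symmetry identity $\operatorname{span}(v_{j \dotplus 1} - v_j) = \operatorname{span}(M_p)$: establishing it cleanly and lining up the $\dotplus$-indexing with the geometric $\pi/2$-rotation. Once that identity is in place, every orthogonality check reduces to comparing either a one-dimensional $x^{\perp}$ (when $x$ lies in the relative interior of an edge) or the supporting cone at a vertex against a one-dimensional span, each of which is immediate.
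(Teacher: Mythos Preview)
Your proof is correct and follows essentially the same route as the paper's: both identify, via the $\pi/2$ rotational symmetry (equivalently, the explicit supporting functional $f_j$), that the Birkhoff--James orthogonal complement at any point of the edge $E_j$ is $\operatorname{span}(M_p)$, and then iterate this shift $j\mapsto p\mapsto q$ to pin down left-symmetric points as edge midpoints. The only cosmetic differences are that the paper writes out $f_j$ explicitly and treats vertices and interior edge points in a single stroke (since any $u\in F_j$ has $M_p\in u^\perp$), whereas you split the necessity into the smooth and vertex cases.
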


\begin{proof}

Up to an isomorphism, we may assume that the vertices of $ B_\mathbb{X} $ are $ v_j=(\cos \frac{2\pi j}{4n},\sin \frac{2\pi j}{4n}) $, $ j\in \mathcal{N} $. Let $ F_j $ be the facet of $ S_{\mathbb{X}} $ containing the vertices $ v_j $ and $ v_{j\dotplus 1} $. Let us first prove the necessary part of the theorem. Suppose, $ u\in S_{\mathbb{X}} $ is a left-symmetric point in $ \mathbb{X} $. Without loss of generality, we may assume that $ u\in F_j $, for some $ j\in \mathcal{N} $. Let $ f_j $ be the supporting functional corresponding to the facet $ F_j $. Then\\

$ f_j(x,y)= \{x\cos \frac{\pi (2j+1)}{4n}+y\sin \frac{\pi (2j+1)}{4n}\}\sec\frac{\pi}{4n} ~~$ for all $~~ (x,y)\in \mathbb{X} $.\\

It can be easily checked that $ kerf_j\cap S_\mathbb{X} = \{\frac{v_p+v_{p\dotplus 1}}{2}, \frac{v_{\mathbf{p}}+v_{\mathbf{p}\dotplus 1}}{2} \} $. Therefore, $ \{\frac{v_p+v_{p\dotplus 1}}{2}, \frac{v_{\mathbf{p}}+v_{\mathbf{p}\dotplus 1}}{2} \}\subseteq u^{\perp}\cap S_{\mathbb{X}} $. Since $ \frac{v_p+v_{p\dotplus 1}}{ 2 } $ is the middle point of the edge $ F_p $, $ \frac{v_p+v_{p\dotplus 1}}{ 2 } $ is smooth. Now, using analogous techniques as above, we obtain $  (\frac{v_p+v_{p\dotplus 1}}{ 2 })^{\perp}\cap S_{\mathbb{X}} = \{\frac{v_{q}+v_{q\dotplus 1}}{2}, \frac{v_{\mathbf{q}}+v_{\mathbf{q}\dotplus 1}}{2} \} = \{\frac{v_{q}+v_{q\dotplus 1}}{2}, \frac{v_j+v_{j\dotplus 1}}{2} \} $. Since we have assumed that $ u\in F_j $ is a left-symmetric point, we get that $ u $ must be of the form $ \frac{v_j+v_{j\dotplus 1}}{2} $. In other words, $ u $ must be the midpoint of the edge $ F_j $. We now prove the sufficient part of the theorem. Let $ u $ be the midpoint of some edge $ F_j $, $ j\in \mathcal{N} $. Then $ u^{\perp}\cap S_{\mathbb{X}} = \{\frac{v_p+v_{p\dotplus 1}}{2}, \frac{v_{\mathbf{p}}+v_{\mathbf{p}\dotplus 1}}{2} \} $. Clearly, $ \frac{v_p+v_{p\dotplus 1}}{2}, \frac{v_{\mathbf{p}}+v_{\mathbf{p}\dotplus 1}}{2} $ are the middle points of the edges $ F_p $ and $ F_{\mathbf{p}} $ respectively. Once again, using above arguments, it follows that $ u \in (\frac{v_p+v_{p\dotplus 1}}{2})^{\perp}\cap (\frac{v_{\mathbf{p}}+v_{\mathbf{p}\dotplus 1}}{2})^{\perp} $. This completes the proof of the theorem. 
\end{proof}

\begin{theorem}\label{4k gon right-symmetric}
Let $ \mathbb{X} $ be a two-dimensional real polyhedral Banach space such that $ S_{\mathbb{X}} $ is a regular polygon with $ 4n $ sides, where $ n \in \mathbb{N}. $ Then $ x \in S_{\mathbb{X}} $ is a right-symmetric point in $ \mathbb{X} $ if and only if $ x $ is an extreme point of $ B_{\mathbb{X}}. $
	
\end{theorem}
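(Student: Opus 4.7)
My plan is to work within the parameterization of the proof of Theorem \ref{4k gon left-symmetric}: the vertices of $B_\mathbb{X}$ are $v_j = (\cos\tfrac{\pi j}{2n}, \sin\tfrac{\pi j}{2n})$ for $j\in\mathcal{N}$, $F_j$ is the facet with endpoints $v_j$ and $v_{j\dotplus 1}$, $m_j = \tfrac{1}{2}(v_j + v_{j\dotplus 1})$ its midpoint, and $f_j$ is the normalised supporting functional of $F_j$; subscripts will be understood modulo $4n$. Two facts already present in the previous proof are essential: (a) if $u$ is a smooth point in the relative interior of $F_l$, then $u^\perp\cap S_\mathbb{X} = \{m_{l\dotplus n}, m_{l\dotplus 3n}\}$, which contains no vertex of $B_\mathbb{X}$; and (b) at a vertex $v_l$, the common endpoint of $F_{l-1}$ and $F_l$, the norming functionals form the segment between $f_{l-1}$ and $f_l$ in $B_{\mathbb{X}^*}$, so that $v_l \perp_B y$ if and only if there exists $\alpha \in [0,1]$ with $\alpha f_{l-1}(y) + (1-\alpha)f_l(y) = 0$.

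For the sufficient direction, take $x = v_k$ and any $u \in S_\mathbb{X}$ with $u \perp_B v_k$. If $u$ were smooth, fact (a) would force $v_k$ to equal some midpoint $m_{l\dotplus n}$ (up to sign), which is impossible; so $u = v_l$ is a vertex. Using (b) with $y = v_k$, together with the trigonometric identity $f_l(v_k) = \sec\tfrac{\pi}{4n}\cos(\theta_{v_k}-\theta_{m_l})$ implicit in the previous proof, one checks that $v_l \perp_B v_k$ holds precisely when $l \equiv k \pm n \pmod{4n}$. Since this condition is plainly symmetric in $l$ and $k$, I obtain $v_k \perp_B v_l = u$, proving that $v_k$ is right-symmetric.

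For the necessary direction I argue contrapositively: suppose $x \in S_\mathbb{X}$ is not a vertex, so it lies in the relative interior of some facet $F_j$, and write $x = (1-t)v_j + tv_{j\dotplus 1}$ for some $t \in (0,1)$. Smoothness gives $x^\perp\cap S_\mathbb{X} = \{\pm m_p\}$, where $p = j\dotplus n$. Trigonometric evaluation yields $f_p(x) = (2t-1)\tan\tfrac{\pi}{4n}$, together with $f_{p-1}(x) > 0$ and $f_{p+1}(x) < 0$ for every $x \in F_j$. Hence the equation $\alpha f_{p-1}(x) + (1-\alpha)f_p(x) = 0$ is solvable in $\alpha \in [0,1]$ whenever $t \le \tfrac12$, producing $v_p \perp_B x$; the symmetric equation $\alpha f_p(x) + (1-\alpha)f_{p+1}(x) = 0$ is solvable when $t \ge \tfrac12$, producing $v_{p\dotplus 1} \perp_B x$. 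In either case the chosen vertex is not a scalar multiple of $m_p$ and thus lies outside $x^\perp$, so $x \not\perp_B u$, violating right-symmetry.

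The principal obstacle is the sign analysis of $f_{p-1}(x)$ and $f_{p+1}(x)$ uniformly on $F_j$. A cleaner geometric alternative sidesteps trigonometry: the line $\ker f_p = \mathrm{span}\{m_j\}$ bisects $F_j$ at its midpoint (since $m_p$ and $m_j$ are Euclidean-perpendicular, being $\pi/2$ apart in angle), and the two-sided fan $v_p^\perp$, bounded by $\ker f_{p-1}$ and $\ker f_p$, contains the half-segment $[v_j, m_j]$; this makes $v_p \perp_B x$ evident for $t \le \tfrac12$, and the mirrored argument with $v_{p\dotplus 1}$ handles $t \ge \tfrac12$.
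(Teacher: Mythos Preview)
Your proof is correct, but it follows a genuinely different route from the paper's.  The paper argues both directions through Theorem~\ref{right-symmetric characterization} (the $x^{+}/x^{-}$ characterization of right-symmetric points): for necessity it takes a smooth $u\in F_j$, picks smooth points $w\in F_p\cap(u^{+}\setminus u^{\perp})$ and $w'\in F_{\mathbf p}\cap(u^{+}\setminus u^{\perp})$, and shows that $u\in w^{-}$ or $u\in w'^{-}$, contradicting the characterization; for sufficiency it describes $\mathcal{S}_0=(v_0^{+}\setminus v_0^{-})\cap S_{\mathbb X}$ and $\mathcal{D}_0=v_0^{-}\cap S_{\mathbb X}$ explicitly and runs a combinatorial index argument over the vertices to verify that $v_0\in w^{-}\Rightarrow w\in v_0^{-}$.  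You instead work directly from the definition via James' supporting-functional criterion: for sufficiency you classify all $u$ with $u\perp_B v_k$ (forcing $u=v_l$ with $l\equiv k\pm n$) and observe the symmetry of this condition; for necessity you exhibit a concrete witness vertex $v_p$ or $v_{p\dotplus 1}$ orthogonal to $x$ but not conversely.  Your approach is more self-contained and computational, bypassing Theorem~\ref{right-symmetric characterization} entirely; the paper's approach is less explicit but illustrates the utility of that characterization.  One presentational remark: the identity $f_l(v_k)=\sec\tfrac{\pi}{4n}\cos(\theta_{v_k}-\theta_{m_l})$ and the claim that $v_l\perp_B v_k\Leftrightarrow l\equiv k\pm n$ deserve a line of justification rather than being left implicit, since the previous theorem does not state them; your verification that $v_l^{\perp}\cap S_{\mathbb X}$ is precisely the short arc between $m_{l\dotplus n-1}$ and $m_{l\dotplus n}$ (and its antipode), containing only the vertex $v_{l\dotplus n}$, would make this transparent.
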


\begin{proof}

Up to an isomorphism, we may assume that the vertices of $ B_\mathbb{X} $ are $ v_j=(\cos \frac{2\pi j}{4n},\sin \frac{2\pi j}{4n}) $, $ j\in \mathcal{N} $. Let $ F_j $ be the facet of $ S_{\mathbb{X}} $ containing the vertices $ v_j $ and $ v_{j\dotplus 1} $. Let us first prove the necessary part of the theorem. In order to prove so, we will show that any smooth point of $ S_\mathbb{X} $ is not a right-symmetric point in $ \mathbb{X} $. Let $ u\in S_\mathbb{X} $ be a smooth point. Without loss of generality, suppose $ u\in F_j $ for some $ j\in \mathcal{N} $. Then by the previous theorem, $ u^{\perp}\cap S_{\mathbb{X}}=\{\frac{v_p+v_{p\dotplus 1}}{2}, \frac{v_{\mathbf{p}}+v_{\mathbf{p}\dotplus 1}}{2} \} $, namely, the middle points of the edges $ F_p $ and $ F_{\mathbf p} $ respectively. Clearly, $ F_p\cap (u^{+}~\setminus~ u^{\perp}) $ and $ F_{\mathbf p}\cap (u^{+}~\setminus~u^{\perp}) $ are nonempty and contain smooth points of $ B_\mathbb{X} $. Let $ w $ and $ w' $ be two smooth points of $ B_\mathbb{X} $ lying in $ F_p\cap (u^{+}~\setminus~u^{\perp}) $ and $ F_{\mathbf p}\cap (u^+~\setminus~u^{\perp}) $ respectively. Then it is easy to see that either, $ u\in w^{-} $, or $ u\in w'^{-} $.  In either occasion, $ w $ and $ w' $ are contained in $ u^{+}~\setminus~u^{\perp} $. Hence, by Theorem \ref{right-symmetric characterization}, $ u $ is not right-symmetric. Therefore, if $ u\in S_\mathbb{X} $ is right-symmetric point in $ \mathbb{X} $ then $ u $ must be an extreme point of $ B_\mathbb{X} $. We now prove the sufficient part of the theorem. Suppose $ u\in S_\mathbb{X} $ is an extreme point of $ B_\mathbb{X} $. Without loss of generality, we may assume that $ u=v_0 $. Clearly, $ v_0 \in F_0 \cap F_{4n-1} $. Let $ f_0 $ and $ f_{4n-1} $ be the supporting functionals corresponding to the facets $ F_0 $ and $ F_{4n-1} $ respectively. It is straightforward to check that $ ker~f_0 \cap S_\mathbb{X} = \{\frac{v_n+v_{n \dotplus 1}}{2}, \frac{v_{3n}+v_{3n\dotplus 1}}{2}\} $ and $ ker~f_{4n-1}\cap S_\mathbb{X} = \{\frac{v_n+v_{n-1}}{2}, \frac{v_{3n}+v_{3n-1}}{2}\} $. Let $ x_1=\frac{v_n+v_{n-1}}{2} $ and let $ x_2=\frac{v_{3n}+v_{3n\dotplus 1}}{2} $. Consider the sets\\

$ \mathcal{S}_0=\{ \alpha x_1 +\beta x_2 : \| \alpha x_1 + \beta x_2 \|=1, \alpha > 0,\beta > 0 \} $,\\

$ \mathcal{D}_0= S_\mathbb{X}\setminus \mathcal{S}_0 $.\\

\noindent Now, it is easy to see that\\

$ \mathcal{S}_0 = (v_0^{+}~\setminus v_0^{-})\cap S_\mathbb{X}  = \{ z \in S_\mathbb{X}: f_0(z) > 0 \} \cap \{ z \in S_\mathbb{X}: f_{4n-1}(z)> 0 \} , $\\

$  \mathcal{D}_0 = v_0^-\cap S_{\mathbb{X}} = \{ z \in S_{\mathbb{X}}: f_0(z)\leq 0 \}\cup \{ z\in S_{\mathbb{X}}: f_{4n-1}(z)\leq 0 \} $.\\

\noindent In other words, $ (v_0^{+}~\setminus v_0^{-})\cap S_\mathbb{X} $ is the shorter arc between $ x_1 $ and $ x_2 $ excluding $ x_1 $ and $ x_2 $ on $ S_\mathbb{X} $, and $ v_0^-\cap S_{\mathbb{X}} $ is the closed longer arc between $ x_1 $ and $ x_2 $ on $ S_\mathbb{X} $. In similar spirit, for any extreme point $ v_r $ of $ B_\mathbb{X} $, we define $ \mathcal{S}_r = (v_r^{+}~\setminus v_r^{-})\cap S_\mathbb{X} $ and $ \mathcal{D}_r = v_r^{-}\cap S_\mathbb{X} $ . It is easy to see that $ \mathcal{S}_0\cap E_\mathbb{X} = \{ v_{3n \dotplus 1}, v_{3n \dotplus 2}, \dots , v_0, v_1, \dots ,  v_{n-1}\} $ $ ( \{ v_0\} $ for $ n= 1 ) $ and $ \mathcal{D}_0\cap E_\mathbb{X}=\{ v_{n}, v_{n \dotplus 1}, \dots ,  v_{3n}\} $. We shall prove that $ v_0\notin w^{-} $ for all $ w\in \mathcal{S}_0 $. We observe that for any smooth point $ \kappa \in B_\mathbb{X} $ lying in $ F_j $, $ \kappa^{-} $ is properly contained in both $ v_j^{-} $ and $ v_{j\dotplus 1}^{-} $. Therefore, to prove $ v_0\notin w^{-} $ for every $ w\in \mathcal{S}_0 $, it is sufficient to prove $ v_0\notin v_j^{-}  $ for every $ v_j\in \mathcal{S}_0 $. It is evident that $ v_0\in v_r^{-} $ if and only if $ \mathcal{D}_r\cap E_\mathbb{X} $ contains $ v_0 $. Now, it is easily revealed that $ \mathcal{D}_r\cap E_\mathbb{X}=\{v_{r\dotplus s}\}_{s=n}^{3n} $. We observe that $ (r\dotplus s)=0 $ if and only if $ r\in \{n , n+1 , \dots , 3n \} $ i.e., $ v_r\in \mathcal{D}_0 $. Since $ (\mathcal{S}_0\cap E_{\mathbb{X}})\cap(\mathcal{D}_0\cap E_{\mathbb{X}}) = \emptyset $, we get that $ v_0\notin w^{-} $ for all $ w\in \mathcal{S}_0 $. In other words, $ v_0\in w^{-} $ implies $ w\in v_0^{-} $. In similar fashion, it can be shown that $ v_0\in w^{+} $ implies $ w\in v_0^{+} $. Therefore, by Theorem \ref{right-symmetric characterization}, $ v_0 $ is right-symmetric and this completes the proof of the theorem.

\end{proof}

Let $ \mathcal{M} = \{ 0, 1, \dots , 2n-1 \} $.  We equip $ \mathcal{M} $ with a binary operation $ `` \dotplus " $, defined by $ n_1 \dotplus n_2 = n_1+n_2, ~ \textit{if} ~ n_1 + n_2 < 2n, $ and $ n_1 \dotplus n_2 = n_1 +n_2 -2n, ~\textit{if} ~ n_1 + n_2 \geq 2n,$ for all $ n_1, n_2 \in \mathcal{M} $. Let $ \mathbb{Z} $ be a three-dimensional real polyhedral Banach space such that its unit sphere $ S_\mathbb{Z} $ is a right prism having a regular $ 2n $-gon as its base.Up to an isomorphism, we may assume that the vertices of $ B_\mathbb{X} $ are $ v_{\pm j}=(\cos\frac{2j\pi}{2n}, \sin\frac{2j\pi}{2n},\pm 1) $, $ j\in \mathcal{M} $. Clearly, the points $ \frac{(v_j+v_{-j})}{2} $, $ j\in \mathcal{M} $, are co-planer. Let us consider the plane containing $ \frac{(v_j + v_{-j})}{2} $,  $ j\in \mathcal{M} $. We observe that this plane is uniquely determined and we name it as the $ \mathbb{X} $-plane. Let us denote any rectangular facet of $ S_\mathbb{Z} $ containing the verices $ v_{\pm j} $ and $ v_{\pm (j\dotplus 1)} $ by $ V_j $. The plane $ |z|=1 $ intersects $ S_\mathbb{Z} $ in two $ 2n $-gonal facets. Let us denote these facets by $ \pm U $. In particular, $ S_\mathbb{Z} $ contains two types of facets,\\

\noindent (a) Facets $ (V_j)_{j=0}^{2n-1} $ which intersect the $ \mathbb{X} $-plane.\\
\noindent (b) Facets $ \pm U $ which are parallel to the $ \mathbb{X} $-plane.\\

\noindent In our context the facets of type-(a) will be called vertical facets and the facets of type-(b) will be called horizontal facets. The edges of rectangular facets that intersect the $ \mathbb{X} $-plane will be called vertical edges and the edges  of rectangular facets that are parallel to the $ \mathbb{X} $-plane will be called horizontal edges. Therefore, each vertical edge is contained in the supporting planes of vertical facets $ V_j $ and $ V_{j\dotplus 1} $ for some $ j\in \mathcal{M} $ and each horizontal edge is contained in the supporting planes of horizontal facet $ U $ (or $ -U $) and vertical facet $ V_j $ for some $ j\in \mathcal{M}. $ Now, $ \mathbb{X} = \{ (x, y, 0) \in \mathbb{Z} \} $. Let $ \mathbb{Y} = \{ (0, 0, z) \in \mathbb{Z} \} $. Let us equip $ \mathbb{X} $ and $ \mathbb{Y} $ with such norms that $ S_\mathbb{X} $ is a regular $ 2n $-gon and extreme points of $ B_\mathbb{X} $ are $ (\cos\frac{2j\pi}{2n}, \sin\frac{2j\pi}{2n}),~ j\in \mathcal{M} $ and $ B_\mathbb{Y} = [-1, 1] $. It is easy to see that $ \mathbb{Z} = \mathbb{X} \bigoplus_\infty \mathbb{Y} $. The points $ \pm 1 $ in $ S_{\mathbb{Y}} $ are both left-symmetric and right-symmetric in $ \mathbb{Y}. $ When $ n $ is even, applying Theorem \ref{left-symmetric} and Theorem \ref{4k gon left-symmetric}, we observe that a point in $ S_\mathbb{Z} $ is left-symmetric in $ \mathbb{Z} $ if and only if it is the centroid of some vertical facet or it is the centroid of some horizontal facet in $ S_{\mathbb{Z}} $. Similarly, applying Theorem \ref{right-symmetric} and Theorem \ref{4k gon right-symmetric}, we observe that a point in $ S_{\mathbb{Z}} $ is a right-symmetric point in $ \mathbb{Z} $ if and only if it is an extreme point of $ B_\mathbb{Z} $. Whenever $ n $ is odd, $ \mathbb{X} $ becomes a Radon plane. Now, applying Theorem \ref{left-symmetric}, we observe that a point in $ S_{\mathbb{Z}} $ is a left-symmetric point in $ \mathbb{Z} $ if and only if it is the centroid of some horizontal facet in $ S_{\mathbb{Z}} $ or it lies in $ S_\mathbb{X}.$ Similarly, applying Theorem \ref{right-symmetric}, we observe that a point in $ S_{\mathbb{Z}} $ is a right-symmetric point in $ \mathbb{Z} $ if and only if it lies in the horizontal edges of $ S_\mathbb{Z} $. We rewrite these simple facts in form of two separate theorems below to end the present article.

\begin{theorem}\label{left right symmetric 4n-prism}
Suppose $ \mathbb{Z} $ is a three-dimensional real polyhedral Banach space such that its unit sphere is a right prism having a regular $ 4n $-gon as its base. Then $ z\in S_\mathbb{Z} $ is a left-symmetric point in $ \mathbb{Z} $ if and only if it is either a centroid of a vertical facet in $S_\mathbb{Z} $ or a centroid of a horizontal facet in $ S_\mathbb{Z} $. Also, $ z\in S_\mathbb{Z} $ is a right-symmetric point in $ \mathbb{Z} $ if and only if it is an extreme point of $ B_\mathbb{Z} $.
\end{theorem}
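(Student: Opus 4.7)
The plan is to reduce both assertions to the two-dimensional results Theorem \ref{4k gon left-symmetric} and Theorem \ref{4k gon right-symmetric} via the infinity-sum decomposition $\mathbb{Z} = \mathbb{X} \bigoplus_{\infty} \mathbb{Y}$ already set up in the paragraph preceding the theorem, where $S_\mathbb{X}$ is a regular $4n$-gon with vertices $v_j = (\cos\frac{2\pi j}{4n},\sin\frac{2\pi j}{4n})$ and $\mathbb{Y} = \mathbb{R}$. The proof would begin by recording this identification explicitly (so that every $z \in \mathbb{Z}$ is viewed as a pair $(x,y)$ with $\|z\| = \max\{\|x\|,|y|\}$) and then treat the two parts of the theorem independently.

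For the left-symmetric assertion, I would invoke Theorem \ref{left-symmetric}: $(x,y)\in S_\mathbb{Z}$ is left-symmetric if and only if either $y=0$ and $x$ is left-symmetric in $\mathbb{X}$, or $x=0$ and $y$ is left-symmetric in $\mathbb{Y}$. By Theorem \ref{4k gon left-symmetric}, the left-symmetric unit vectors of $\mathbb{X}$ are precisely the midpoints of the edges of $S_\mathbb{X}$; averaging the four vertices $v_{\pm j}, v_{\pm (j\dotplus 1)}$ of the rectangular facet $V_j$ shows that the pairs $(x,0)$ with $x$ an edge midpoint of $S_\mathbb{X}$ coincide exactly with the centroids of the vertical facets $V_j$. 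In $\mathbb{Y}=\mathbb{R}$ every unit vector is trivially left-symmetric, and $(0,\pm 1)$ is the centroid of the horizontal facet $\pm U$. These two classes exhaust the left-symmetric points of $\mathbb{Z}$.

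For the right-symmetric assertion, I would invoke Theorem \ref{right-symmetric}: $(x,y)\in S_\mathbb{Z}$ is right-symmetric if and only if $\|x\|=|y|=1$ and both coordinates are right-symmetric in their respective summands. In $\mathbb{Y}=\mathbb{R}$ this forces $y=\pm 1$, and Theorem \ref{4k gon right-symmetric} forces $x$ to be a vertex $v_j$ of $B_\mathbb{X}$. Thus the right-symmetric points of $\mathbb{Z}$ are exactly the pairs $(v_j,\pm 1)$, which are precisely the vertices of the right prism $B_\mathbb{Z}$, i.e.\ the extreme points of $B_\mathbb{Z}$.

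No step here is genuinely hard; the only real task is the geometric bookkeeping that matches edge midpoints and vertices of $S_\mathbb{X}$ with centroids of facets and extreme points of the prism $S_\mathbb{Z}$. Since the discussion preceding the theorem already carries out this correspondence, the formal proof amounts essentially to citing the four theorems \ref{left-symmetric}, \ref{right-symmetric}, \ref{4k gon left-symmetric}, \ref{4k gon right-symmetric} and consolidating these observations, which is presumably why the authors chose to state the result without a separate proof block.
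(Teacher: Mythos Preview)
Your proposal is correct and matches the paper's own argument essentially verbatim: the paper derives the theorem in the paragraph immediately preceding its statement by invoking Theorems \ref{left-symmetric}, \ref{right-symmetric}, \ref{4k gon left-symmetric}, and \ref{4k gon right-symmetric} together with the infinity-sum decomposition $\mathbb{Z}=\mathbb{X}\bigoplus_\infty\mathbb{Y}$, exactly as you describe. Your observation that the authors omit a separate proof block for this reason is also accurate.
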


\begin{theorem}\label{left right symmetric 4n+2-prism }
Suppose $ \mathbb{Z} $ is a three-dimensional real polyhedral Banach space such that its unit sphere is a right prism having a regular $ (4n+2) $-gon as its base. Then $ z\in S_\mathbb{Z} $ is a left-symmetric point in $ \mathbb{Z} $ if and only if it is either a centroid of a horizontal facet in $ S_\mathbb{Z} $ or of the form $ (x, y , 0) $. Also, $ z\in S_\mathbb{Z} $ is a right-symmetric point in $ \mathbb{Z} $ if and only if it lies on a horizontal edge of $ S_\mathbb{Z} $.
\end{theorem}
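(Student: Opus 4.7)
The plan is to realize $\mathbb{Z}$ as an infinity sum and then apply the general characterizations already obtained in Theorem \ref{left-symmetric} and Theorem \ref{right-symmetric}, combined with Heil's Radon-plane theorem for the two-dimensional slice. Concretely, I would set $\mathbb{X} = \{(x,y,0) : (x,y,0) \in \mathbb{Z}\}$ and $\mathbb{Y} = \{(0,0,z) : (0,0,z) \in \mathbb{Z}\} \cong \mathbb{R}$ so that $\mathbb{Z} = \mathbb{X}\bigoplus_\infty\mathbb{Y}$, where $S_\mathbb{X}$ is a regular $(4n+2)$-gon and $B_\mathbb{Y} = [-1,1]$. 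Since $4n+2 = 2(2n+1)$ with $2n+1$ odd, Heil's result \cite{H} quoted in the introduction gives that $\mathbb{X}$ is a Radon plane, and hence every element of $S_\mathbb{X}$ is simultaneously a left-symmetric and a right-symmetric point in $\mathbb{X}$. Likewise, both $+1$ and $-1$ are trivially left- and right-symmetric in $\mathbb{Y}$.

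For the left-symmetric assertion I would invoke Theorem \ref{left-symmetric}: a point $(x,y,z) \in S_\mathbb{Z}$ is left-symmetric in $\mathbb{Z}$ iff either (a) $(x,y,0) \in S_\mathbb{X}$ is left-symmetric in $\mathbb{X}$ with $z=0$, or (b) $(x,y,0)=(0,0,0)$ and $z = \pm 1$ is left-symmetric in $\mathbb{Y}$. By the Radon-plane observation case (a) imposes no further restriction on $(x,y,0) \in S_\mathbb{X}$, producing precisely the points of $S_\mathbb{Z}$ of the form $(x,y,0)$. Case (b) produces the two points $(0,0,\pm 1)$, which by the geometric description given just before the statement are exactly the centroids of the two horizontal facets $\pm U$. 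Taking the union gives the stated characterization.

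For the right-symmetric assertion I would invoke Theorem \ref{right-symmetric}: $(x,y,z) \in S_\mathbb{Z}$ is right-symmetric iff $\|(x,y,0)\|_\mathbb{X} = |z| = 1$ and each coordinate is right-symmetric in its factor. Again using that $\mathbb{X}$ is Radon and $\mathbb{Y} = \mathbb{R}$, both right-symmetry conditions are automatic, so the right-symmetric points are exactly the $(x,y,z)$ with $(x,y,0) \in S_\mathbb{X}$ and $z \in \{-1,+1\}$. Geometrically, these are the boundaries of the horizontal facets $\pm U$, i.e., the union of all horizontal edges of $S_\mathbb{Z}$.

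There is no real obstacle here: the two deep ingredients (the infinity-sum theorems and Heil's Radon-plane theorem) have already been established or quoted, so the argument is a bookkeeping translation between the analytic criteria of Theorems \ref{left-symmetric}, \ref{right-symmetric} and the geometric terminology (horizontal/vertical facets, horizontal edges) fixed in the paragraph preceding the statement. The only point one must verify carefully is the geometric identification in case (b) of the left-symmetric argument, namely that $(0,0,\pm 1)$ are the centroids of $\pm U$, which is immediate from the explicit coordinates $v_{\pm j} = (\cos\tfrac{2j\pi}{2n},\sin\tfrac{2j\pi}{2n},\pm 1)$.
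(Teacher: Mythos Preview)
Your proposal is correct and follows essentially the same approach as the paper: the paper likewise realizes $\mathbb{Z}=\mathbb{X}\bigoplus_\infty\mathbb{Y}$ with $S_\mathbb{X}$ the regular $(4n+2)$-gon and $B_\mathbb{Y}=[-1,1]$, invokes Heil's result to conclude that $\mathbb{X}$ is a Radon plane, and then reads off the left- and right-symmetric points directly from Theorems~\ref{left-symmetric} and~\ref{right-symmetric}. Your geometric identifications of $(0,0,\pm1)$ with the centroids of $\pm U$ and of $\{(x,y,\pm1):(x,y)\in S_\mathbb{X}\}$ with the horizontal edges match the paper's as well.
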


\end{document}